\newif\ifPreprint \Preprinttrue
\newif\ifSubmission \Submissionfalse
\patchcmd{\@settitle}{\uppercasenonmath\@title}{\scshape\large}{}{}
\patchcmd{\@setauthors}{\MakeUppercase}{\scshape\normalsize}{}{}
\newcommand{\rev}[1]{#1}
\theoremstyle{plain}
\newtheorem{corollary}{Corollary}
\newtheorem{theorem}{Theorem}
\theoremstyle{definition}
\newtheorem{example}{Example}
\newtheorem{remark}{Remark}
\theoremstyle{remark}
\newcommand{\set}[1]{\{#1\}}
\newcommand{\Set}[1]{\left\{#1\right\}}
\newcommand{\defset}[3][\defsep]{\set{#2#1#3}}
\newcommand{\Defset}[3][\defsep]{\Set{#2#1#3}}
\newcommand{\Unc}{\mathcal{U}}
\newcommand{\st}{\text{s.t.}}
\newcommand{\R}{\mathbb{R}}
\newcommand{\N}{\mathbb{N}}
\renewcommand{\P}{\mathcal{P}}
\newcommand{\fa}{\text{ for all }}
\newcommand{\abbr}[1][abbrev]{#1.\xspace}
\newcommand{\ie}{\abbr[i.e]}
\newcommand{\define}{\mathrel{{\mathop:}{=}}}
\newcommand{\relint}{\text{relint}}
\newcommand{\lin}{\text{lin}}
\newcommand{\bdt}{\boldsymbol{\cdot}}
\begin{document}

\title[On AAR-LCPs and MILPs]%
{On the Relation Between Affinely Adjustable Robust Linear
  Complementarity and Mixed-Integer Linear Feasibility Problems}
\author[C. Biefel, M. Schmidt]%
{Christian Biefel, Martin Schmidt}

\address[C. Biefel]{}
\email{cm.biefel@gmail.com}

\address[M. Schmidt]{%
  Trier University,
  Department of Mathematics,
  Universitätsring 15,
  54296 Trier,
  Germany}
\email{martin.schmidt@uni-trier.de}

\date{\today}

\begin{abstract}
  We consider adjustable robust linear complementarity problems
and extend the results of Biefel et al.~(2022) towards convex and
compact uncertainty sets.
Moreover, for the case of polyhedral uncertainty sets, we prove that
computing an adjustable robust solution of a given linear
complementarity problem is equivalent to solving a properly chosen
mixed-integer linear feasibility problem.


\end{abstract}

\keywords{Linear Complementarity Problems,
Adjustable Robustness,
Robust Optimization,
Mixed-Integer Linear Optimization%
%
%
}
\subjclass[2010]{90C33, 
91B50, 
90Cxx, 
90C34
%
%
}

\maketitle

\section{Introduction}
\label{sec:introduction}

We consider affinely adjustable robust (AAR) linear
complementarity problems (LCPs).
The classic, \ie, deterministic, LCP is defined as follows.
Given a matrix $M \in \R^{n\times n}$ and a vector $q \in \R^n$, the
LCP($q, M$) is the problem to find a vector $z \in \R^n$ that
satisfies the conditions
\begin{equation}
  \label{eq:LCPold}
  z\geq 0,
  \quad Mz+q\geq 0,
  \quad z^\top(Mz+q)=0
\end{equation}
or to show that no such vector exists.
In the following, we use the standard $\perp$-notation and
abbreviate~\eqref{eq:LCPold} as
\begin{equation}
  \label{eq:LCP}
  0 \leq z \perp Mz + q \geq 0.
\end{equation}

LCPs are very important both in applications as well as in
mathematical theory itself.
For instance, they are used to model market equilibrium problems in
many applied studies of gas or electricity markets
\cite{Gabriel_et_al:2012} but also play an important role in
mathematical optimization, game theory, or general matrix theory.
We refer the interested reader to the seminal book
\cite{Cottle_et_al:2009} for an overview.

Although there is a very strong connection between LCPs and mathematical
optimization and although the latter has been studied a lot in the
recent decades under data uncertainty, the field of LCPs under
uncertainty is still in its infancy.
Stochastic approaches can be found in
\cite{Chen_Fukushima:2005,Chen_et_al:2012,Chen_et_al:2009,Lin_Fukushima:2006}
and are mainly based on minimizing the expected residual gap of the
uncertain LCP.
On the other hand, robust approaches for uncertain LCPs have been
considered recently as well.
The first rigorous analysis of robust LCPs can be found in
\cite{Xie_Shanbhag:2014,Xie_Shanbhag:2016}, where the authors apply
the concept of strict robustness \cite{Soyster:1973} to LCPs, which has
been used later in~\cite{Mather_Munsing:2017} in the context of
Cournot--Bertrand equilibria in power networks.
Moreover, in \cite{Krebs_Schmidt:2020,Krebs_et_al:2022}, LCPs have
been studied using $\Gamma$-robustness as introduced in
\cite{Bertsimas_Sim:2003,Sim:2004,Bertsimas_Sim:2004}; see
\cite{Celebi_et_al:2020,Kramer_et_al:2021} for some applications in
power markets.

The most recent paper on robust LCPs, to the best of our knowledge, is
\cite{Biefel_et_al:2022}, where robust LCPs are studied using the
concept of adjustable robustness
\cite{Ben-Tal_et_al:2004,Yanikoglu_et_al:2019}.
In \cite{Biefel_et_al:2022}, the authors study adjustable robust
LCPs in the most simplest setting, which is for affine decision
rules and box uncertainties.
In this short note, we stay with affine decision rules but generalize
the results to general convex and compact uncertainty
sets~$\mathcal{U}$.
In this context, our contribution is twofold.
First, we characterize AAR solutions of robust LCPs and, second, use
this characterization to prove that the AAR LCP with a polyhedral
uncertainty set is equivalent to a properly chosen mixed-integer
linear problem (MILP).

Let us finally note that our study is related to
\cite{Adelgren_Wiecek:2016}, where the authors consider
multi-parametric LCPs for sufficient matrices~$M$.
However, our robust approach as well as the studied relation to MILPs
differ from the concepts and results of \cite{Adelgren_Wiecek:2016}.

We introduce the problem under consideration in
Section~\ref{sec:problem-statement} and derive our main results in
Section~\ref{sec:main-results}.
Afterward, we comment on some special cases and extensions in
Section~\ref{sec:remarks}.


\section{Problem Statement}
\label{sec:problem-statement}

We now define the adjustable robust LCP with affine decision rules.
To this end, let $M \in \R^{n \times n}$ and $q \in \R^n$ as before
and let $T \in \R^{n \times k}$ be given.
We assume that $q$ is perturbed by $Tu$ with $u\in\Unc$.
In what follows, we assume that $\Unc \subset \R^k$ is a convex and
compact uncertainty set \rev{that, w.l.o.g., contains $0$ in its relative interior, \ie,}
$0 \in \relint(\Unc)$.
Then, the affinely adjustable robust LCP($q, M, T, \Unc$) consists of
finding an affine decision rule, \ie, we want to determine $D \in
\R^{n \times k}$ and $r \in \R^n$ such that $z(u) = Du + r$ satisfies
\begin{equation}
  0 \leq z(u) \perp Mz(u) + q(u) \geq 0 \quad \fa u \in \Unc.
\end{equation}
Equivalently, we can state the problem more explicitly as
\begin{equation}
  \label{eq:ULCP-q-new}
  0 \leq Du + r \perp MDu + Mr + q + Tu \geq 0 \quad \fa u \in \Unc.
\end{equation}
Without loss of generality, we may assume that $T \in \R^{n \times k}$
has full column rank; see~\cite{Adelgren_Wiecek:2016}.
\rev{In many applications, some variables are non-adjustable and thus
  have to be fixed before the uncertainty realizes.}
To model \rev{these so-called} here-and-now variables,
we simply require that the first $h$ rows of $D$ are zero \rev{for some $h<n$}.
For more details, we refer to \cite{Biefel_et_al:2022}.

We close this section by brief\/ly introducing the following
notation.
Let $A \in \R^{m\times n}$, $b \in \R^m$, and index
sets~$I\subseteq[m]\define \set{1, \dotsc, m}$ as well as $J
\subseteq [n]$ be given.
Then, $A_{I,J} \in \R^{|I| \times |J|}$ denotes the submatrix of~$A$
consisting of the rows indexed by~$I$ and the columns indexed by~$J$.
Moreover, $b_I$ denotes the subvector with components specified by
entries in~$I$. If $I = J$, we also write $A_I$ instead of $A_{I,I}$.


\section{Main Results}
\label{sec:main-results}

In this section, we state and prove our two main results.
The first one is a full characterization of AAR solutions of robust LCPs.

\begin{theorem}
  \label{thm:complementarity}
  Assume that $\Unc$ is convex and compact with $0\in\relint(\Unc)$
  and let $\mathcal{B} = \set{v^1, \dotsc, v^\ell}$, $\ell \in \N$
  \rev{with $\ell \leq k$}, be
  a basis of the linear hull~$\lin(\Unc)$ of $\Unc$.
  Moreover, let $z(u) = Du + r$ such that $z(u) \geq 0$ as well as
  $Mz(u) + q + Tu\geq 0$ holds for all $u \in \Unc$ and define
  $I \define \defset{i \in [n]}{r_i > 0}$.
  Then, $z(u)=Du+r$ is an AAR solution if and only if $D$ and $r$ satisfy the
  conditions
  \begin{align}
    M_{I,\bdt}r + q_I & = 0, \label{eq:compl-dnom}\\
    (M_{I,\bdt}D + T_{I,\bdt})v^j & = 0, \quad j \in [\ell].\label{eq:compl-linhull}
  \end{align}
\end{theorem}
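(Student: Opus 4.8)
The plan is to prove both directions of the equivalence by exploiting the complementarity condition pointwise in $u$, together with the assumption $0 \in \relint(\Unc)$. Recall that $z(u) = Du + r$ already satisfies both nonnegativity conditions by hypothesis, so being an AAR solution amounts precisely to the orthogonality $z(u)^\top(Mz(u) + q + Tu) = 0$ for all $u \in \Unc$. Since both factors in this inner product are nonnegative vectors, the product vanishes if and only if it vanishes componentwise, \ie, $z_i(u)\,(Mz(u) + q + Tu)_i = 0$ for every $i \in [n]$ and every $u \in \Unc$.

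First I would establish a key structural fact about the index set $I = \defset{i \in [n]}{r_i > 0}$. Evaluating $z(u)$ at $u = 0$ gives $z(0) = r$, so $r_i > 0$ exactly means $z_i(0) > 0$. Because $0 \in \relint(\Unc)$ and $z$ is affine (hence continuous), for each $i \in I$ the component $z_i(u)$ stays strictly positive on a relative neighborhood of $0$ inside $\Unc$. For those $u$, complementarity forces the $i$-th residual $(Mz(u) + q + Tu)_i = 0$. Writing this out, $(M_{I,\bdt}D + T_{I,\bdt})u + M_{I,\bdt}r + q_I = 0$ holds for all $u$ in a relative neighborhood of $0$ in $\Unc$, hence on an open neighborhood of $0$ within the affine hull $\aff(\Unc) = \lin(\Unc)$. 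An affine map vanishing on a relatively open subset of a linear subspace must vanish identically on that subspace: its constant term gives \eqref{eq:compl-dnom}, namely $M_{I,\bdt}r + q_I = 0$, and its linear part must annihilate every direction in $\lin(\Unc)$, which is exactly \eqref{eq:compl-linhull} once we test against the basis vectors $v^j$. This proves the forward direction.

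For the converse, I would assume \eqref{eq:compl-dnom} and \eqref{eq:compl-linhull} and show that $z(u)^\top(Mz(u) + q + Tu) = 0$ for all $u \in \Unc$. The conditions \eqref{eq:compl-linhull} say the linear part $M_{I,\bdt}D + T_{I,\bdt}$ annihilates the basis $\mathcal{B}$, hence all of $\lin(\Unc)$; combined with \eqref{eq:compl-dnom}, the full residual on the rows indexed by $I$ satisfies $(M_{I,\bdt}D + T_{I,\bdt})u + M_{I,\bdt}r + q_I = 0$ for every $u \in \lin(\Unc) \supseteq \Unc$. Thus the residual vanishes on all rows in $I$. For rows $i \notin I$ we have $r_i = 0$, and I would need to argue that these contribute nothing to the inner product as well. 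For each $u$, the componentwise product splits as $\sum_{i \in I} z_i(u)\,(\cdots)_i + \sum_{i \notin I} z_i(u)\,(\cdots)_i$; the first sum vanishes because every residual factor there is zero. The remaining obstacle is the rows $i \notin I$, and this is the step I expect to be the most delicate.

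Handling the rows $i \notin I$ is the crux. The honest resolution is that for $i \notin I$ we have $r_i = 0$, so $z_i(u) = (Du)_i$; since the feasibility hypothesis guarantees $z_i(u) \geq 0$ for all $u \in \Unc$ and $0 \in \relint(\Unc)$, the point $u = 0$ sits in the relative interior of a set on which the linear functional $(Du)_i$ is nonnegative, which forces $(Du)_i = 0$ on all of $\lin(\Unc)$ and hence $z_i(u) = 0$ for every $u \in \Unc$. Therefore the components of $z$ outside $I$ are identically zero on $\Unc$, so their contribution to the inner product vanishes trivially. Putting the two sums together yields $z(u)^\top(Mz(u) + q + Tu) = 0$ for all $u \in \Unc$, completing the converse and the proof. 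I would present the argument that $r_i = 0 \Rightarrow z_i \equiv 0$ on $\Unc$ carefully, as it is the place where the relative-interior assumption does the essential work.
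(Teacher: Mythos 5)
Your proof is correct and follows essentially the same route as the paper's: both directions hinge on using complementarity near $u=0$ (where $z_I$ stays strictly positive) to force the residual rows in $I$ to vanish, and on the relative-interior assumption to show $z_i \equiv 0$ on $\Unc$ for $i \notin I$. The only cosmetic difference is in the forward direction, where the paper tests complementarity at the finitely many points $\delta_j v^j$ while you derive the vanishing of the affine residual on a whole relative neighborhood of $0$ and then extract its constant and linear parts; the underlying mechanism is identical.
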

\begin{proof}
  First, let $z(u) = Du + r$ be an AAR solution.
  Then, $r$ is a nominal solution (as $0 \in \Unc$) and therefore $r$
  satisfies $M_{I,\bdt}r + q_I = 0$, \ie, \eqref{eq:compl-dnom} is
  fulfilled.
  For every $v^j$, $j \in \rev{[\ell]}$, there exists a scalar~$\delta_j > 0$ such
  that $\delta_jv^j \in \Unc$ and $\delta_j D_{I,\bdt} v^j + r_I > 0$ holds.
  Thus, for every $j \in [\ell]$ the AAR solution $z$ satisfies
  \begin{equation*}
    0 = M_{I,\bdt} z(\delta_jv^j) + q_I + \delta_jT_{I,\bdt}v^j
    = \delta_jM_{I,\bdt}Dv^j + \delta_jT_{I,\bdt}v^j
    = \delta_j(M_{I,\bdt}D+T_{I,\bdt})v^j,
  \end{equation*}
  where we used \eqref{eq:compl-dnom} for the second equality.
  Thus, $z$ satisfies \eqref{eq:compl-linhull}.

  Let now $D$ and $r$ satisfy \eqref{eq:compl-dnom} and
  \eqref{eq:compl-linhull}.
  From $0 \in \relint(\Unc)$, it follows that for all $u \in \Unc$
  there exists an  $\varepsilon > 0$ such that $-\varepsilon u \in
  \Unc$.
  Hence, nonnegativity of $z(u)=Du+r$ yields
  \begin{align*}
    \Defset{i \in [n]}{\exists u \in \Unc : D_{i,\bdt}u + r_i > 0}
    \subseteq I.
  \end{align*}
  Thus, for $\bar{I} = [n] \setminus I$,
  \begin{equation*}
    z_{\bar{I}}(u)^\top (M_{\bar{I},\bdt} z(u) + q_{\bar{I}} +
    T_{\bar{I},\bdt}u) = 0
  \end{equation*}
  holds for all $u \in \Unc$.
  On the other hand, every $u \in \Unc$ can be written as a linear
  combination $u = \sum_{i=1}^\ell \lambda_j v^j$ with
  $\lambda_j \in \R$.
  Hence,
  \begin{align*}
    M_{I,\bdt}(Du+r)+q_I+T_{I,\bdt}u
    =
    M_{I,\bdt}Du + T_{I,\bdt} u
    = (M_{I,\bdt}D+T_{I,\bdt}) \left(\sum_{j=1}^\ell \lambda_j v^j\right)
    = 0
  \end{align*}
  holds, where we used \eqref{eq:compl-dnom} for the first and
  \eqref{eq:compl-linhull} for the last equality.
  Therefore, $z(u)=Du+r$ fulfills complementarity and is an AAR solution
  due to the additional assumptions of the theorem.
\end{proof}

\rev{The last theorem states a rather abstract characterization of AAR
  solutions.
  For arbitrary convex and compact uncertainty sets, working with this
  characterization might be difficult.
  However, the characterization can be practically used in more
  specific cases, which is what we do in our second main result
  about polyhedral uncertainty sets, where we use the characterization
  of the last theorem to show that affinely adjustable robust
  solutions are the solutions of a properly chosen MILP.}

\begin{theorem}
  \label{thm:main}
  Let $\Unc = \defset{u \in \R^k}{\Theta u \geq \zeta}$ with
  $\Theta \in \R^{g \times k}$ and $\zeta \in \R^g$ and let
  $\mathcal{B} = \set{v^1, \dotsc, v^\ell}$ be a basis of
  $\lin(\Unc)$.
  Furthermore, let $\rev{b} \in \R$ be sufficiently large and consider the
  mixed-integer linear feasibility problem
  \begin{subequations}
    \label{eq:MIP}
    \begin{align}
      \text{Find} \quad
      &x\in\{0,1\}^n,~D\in\R^{n\times k},
        ~r\in\R_{\geq 0}^n,~A,C\in\R_{\geq 0}^{g\times n}
      \\
      \st \quad
      &r_i\leq bx_i,
      &i\in [n],\label{eq:MIP-d1}\\
      &b(1-x_i)\geq M_{i,\bdt}r+q_i\geq 0,
      &i\in [n],\label{eq:MIP-d2}\\
      &b(1-x_i)\geq (M_{i,\bdt}D+T_{i,\bdt})v^j\geq -b(1-x_i),
      &i \in [n],\, j\in[\ell],\label{eq:MIP-mD1}\\
      &\zeta^\top A_{\bdt,i} + r_i \geq 0,
      & i\in [n],\label{eq:MIP-A1}\\
      &\Theta^\top A_{\bdt,i} = D_{i,\bdt}^\top ,
      & i\in [n],\label{eq:MIP-A2}\\
      &\zeta^\top C_{\bdt,i} + M_{i,\bdt}r +q_i \geq 0,
      &i\in [n],\label{eq:MIP-C1}\\
      &\Theta^\top C_{\bdt,i} = (M_{i,\bdt}D+T_{i,\bdt})^\top ,
      &i\in [n],\label{eq:MIP-C2}\\
      &D_{[h],\bdt}=0.\label{eq:MIP-nonadj}
    \end{align}
  \end{subequations}
  If \eqref{eq:MIP} is feasible, it returns an AAR solution of the
  form $z(u) = Du + r$ of \eqref{eq:ULCP-q-new}.
  If it is infeasible, no AAR solution exists.
\end{theorem}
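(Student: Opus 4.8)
The plan is to prove the two implications separately, in each case matching the constraints of~\eqref{eq:MIP} to the characterization in Theorem~\ref{thm:complementarity}. The key idea is that the two semi-infinite nonnegativity requirements $z(u)\geq 0$ and $Mz(u)+q+Tu\geq 0$ for all $u\in\Unc$ can be dualized rowwise: since $\Unc=\defset{u}{\Theta u\geq\zeta}$ is a nonempty polytope, the condition $\min_{u\in\Unc}D_{i,\bdt}u\geq -r_i$ is, by LP duality, equivalent to the existence of some $A_{\bdt,i}\geq 0$ with $\Theta^\top A_{\bdt,i}=D_{i,\bdt}^\top$ and $\zeta^\top A_{\bdt,i}+r_i\geq 0$, which is exactly~\eqref{eq:MIP-A1}--\eqref{eq:MIP-A2}. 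Dualizing the $i$-th row of $Mz(u)+q+Tu$ in the same way yields~\eqref{eq:MIP-C1}--\eqref{eq:MIP-C2} with certificate $C_{\bdt,i}$. Thus $A$ and $C$ are precisely dual feasibility certificates for rowwise nonnegativity over $\Unc$, while the binary vector $x$ together with the big-$M$ terms will encode the support $I$ from Theorem~\ref{thm:complementarity}.

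For the implication ``feasibility of~\eqref{eq:MIP} $\Rightarrow$ AAR solution'', I would take a feasible tuple $(x,D,r,A,C)$. Writing $I'\define\defset{i}{x_i=1}$, constraint~\eqref{eq:MIP-d1} forces $r_i=0$ whenever $x_i=0$, so $I\define\defset{i}{r_i>0}\subseteq I'$. Using $A_{\bdt,i}\geq 0$ and~\eqref{eq:MIP-A1}--\eqref{eq:MIP-A2} one gets, for every $u\in\Unc$, the chain $D_{i,\bdt}u=A_{\bdt,i}^\top\Theta u\geq\zeta^\top A_{\bdt,i}\geq -r_i$, hence $z(u)\geq 0$; the $C$-certificate analogously gives $Mz(u)+q+Tu\geq 0$ on all of $\Unc$. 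For $i\in I'$ the terms $b(1-x_i)$ vanish, so~\eqref{eq:MIP-d2} collapses to $M_{i,\bdt}r+q_i=0$ and~\eqref{eq:MIP-mD1} to $(M_{i,\bdt}D+T_{i,\bdt})v^j=0$; restricting to $I\subseteq I'$ yields precisely~\eqref{eq:compl-dnom} and~\eqref{eq:compl-linhull}. Theorem~\ref{thm:complementarity} then certifies that $z(u)=Du+r$ is an AAR solution, and~\eqref{eq:MIP-nonadj} enforces the here-and-now rows.

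For the converse, given an AAR solution $z(u)=Du+r$ with $I=\defset{i}{r_i>0}$, I would set $x_i=1$ exactly on $I$, so $I'=I$. Theorem~\ref{thm:complementarity} supplies~\eqref{eq:compl-dnom}--\eqref{eq:compl-linhull}, \ie the $x_i=1$ instances of~\eqref{eq:MIP-d2} and~\eqref{eq:MIP-mD1}; strong LP duality (available because $\Unc$ is compact) produces the certificates $A,C$ for~\eqref{eq:MIP-A1}--\eqref{eq:MIP-C2}; and nonnegativity of $Mz(0)+q=Mr+q$ supplies the remaining lower bounds in~\eqref{eq:MIP-d2}. What is left is to verify the upper big-$M$ bounds, namely $r_i\leq b$ on $I$ and $M_{i,\bdt}r+q_i\leq b$ as well as $\lvert(M_{i,\bdt}D+T_{i,\bdt})v^j\rvert\leq b$ off $I$.

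The hard part is exactly this last step, because a single threshold $b$ must be fixed in advance whereas an arbitrary AAR solution may have large entries. I would resolve it by producing, for each support set on which an AAR solution exists, a representative whose big-$M$-relevant quantities are bounded solely in terms of the data. The decisive observation is that for $i\notin I$ we have $r_i=0$ and hence $D_{i,\bdt}u\geq 0$ for all $u\in\Unc$; since $0\in\relint(\Unc)$, the argument from the proof of Theorem~\ref{thm:complementarity} forces $D_{i,\bdt}v^j=0$ for all $j$. Consequently $M_{i,\bdt}Dv^j=M_{i,I}(D_{I,\bdt}v^j)$, and together with~\eqref{eq:compl-dnom}--\eqref{eq:compl-linhull} the blocks $r_I$ and $(D_{I,\bdt}v^j)_j$ are confined to the solution sets of the fixed systems $M_Ir_I=-q_I$ and $M_I(D_{I,\bdt}v^j)=-T_{I,\bdt}v^j$, intersected with the (data-defined) nonnegativity constraints over $\Unc$. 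This is a polyhedron depending only on $M,q,T,\Theta,\zeta$, so---if nonempty---it contains a point of norm bounded by a constant depending only on the data. As there are only finitely many support sets $I\subseteq[n]$, choosing $b$ above the maximum of these finitely many bounds makes~\eqref{eq:MIP} feasible whenever an AAR solution exists. Combined with the forward direction, this yields the claimed equivalence and hence both assertions of the theorem.
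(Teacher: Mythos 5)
Your proposal is correct and follows the same route as the paper's proof: complementarity is encoded via the binary vector $x$ and the big-$M$ constraints \eqref{eq:MIP-d1}--\eqref{eq:MIP-mD1} and then reduced to \Cref{thm:complementarity}, while the two semi-infinite nonnegativity conditions are dualized rowwise over the polytope $\Unc$ to obtain the certificates $A$ and $C$ in \eqref{eq:MIP-A1}--\eqref{eq:MIP-C2}. The one place where you go beyond the paper is the treatment of $b$: the paper simply takes $b$ \enquote{sufficiently large} and, in the direction \enquote{AAR solution $\Rightarrow$ \eqref{eq:MIP} feasible}, verifies the big-$M$ constraints for the given AAR solution, which implicitly lets $b$ depend on that solution rather than only on the problem data. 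Your argument --- that for $i\notin I$ the relative-interior argument forces $D_{i,\bdt}v^j=0$, so the quantities entering the big-$M$ constraints are determined by $r$ and the vectors $Dv^j$, which are confined to a polyhedron defined by $(M,q,T,\Theta,\zeta)$ and the support set $I$; and that picking one bounded representative per nonempty such polyhedron over the finitely many supports $I\subseteq[n]$ yields a uniform, data-dependent threshold $b$ --- closes this gap and makes the phrase \enquote{let $b$ be sufficiently large} precise. This is a genuine refinement of the paper's argument; everything else (weak duality in the forward direction, strong LP duality for the converse, nonnegativity of $Mr+q$ at $u=0$, and the role of \eqref{eq:MIP-nonadj}) matches the paper's proof step for step.
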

\begin{proof}
  We show that $z(u)=Du+r$ is an AAR solution if and only
  if there exist $x,A,C$ such that $x,D,r,A,C$ solve \eqref{eq:MIP}.

  \rev{We start by proving complementarity of the solutions.
  Let $z(u)=Du+r$ be an AAR solution.
  We define $I \define \defset{i \in [n]}{r_i > 0}$ and
  $x_i=1$ for all $i\in I$ and $x_i=0$ for all $i\in [n]\setminus I$.
  Then, \Cref{thm:complementarity} implies that $x,D,r$ satisfy the constraints
  \eqref{eq:MIP-d1}--\eqref{eq:MIP-mD1} for sufficiently large~$b$.
  On the other hand, if $x,D,r,A,C$ satisfy the conditions~
  \eqref{eq:MIP-d1}--\eqref{eq:MIP-mD1},
  $r_i>0$ implies $x_i=1$ and thus $D$ and $r$ fulfill
  the conditions~\eqref{eq:compl-dnom}
  and \eqref{eq:compl-linhull} of
  \Cref{thm:complementarity}.}

  \rev{It remains to consider the nonnegativity constraints of
  \eqref{eq:ULCP-q-new}.}
  First, we prove nonnegativity of the solution, \ie,
  $Du+r\geq 0$ for all $u\in\Unc$, if and only if there exists a
  matrix $A$ such that $D,r,A$ satisfy \eqref{eq:MIP-A1} and \eqref{eq:MIP-A2}.
  For all $i\in[n]$, we observe that $D_{i,\bdt}u+r_i\geq 0$ holds for
  all $u\in\Unc$ if and only if $\min_{u\in\Unc} \set{D_{i,\bdt}u+r_i}\geq 0$.
  We now employ duality and obtain that this is equivalent to the
  statement that there exists a vector $a\in\R^g_{\geq 0}$ such that
  $\zeta^\top a + r_i \geq 0$ and $\Theta^\top a = D_{i,\bdt}^\top $.
  The matrix~$A\in\R_{\geq 0}^{g\times n}$ then contains the
  vectors~$a$ as columns.

  Next, we show that $MDu+Mr+q+Tu\geq 0$ holds for all
  $u\in\Unc$ if and only if there exists a
  matrix $C$ such that $D,r,C$ satisfy \eqref{eq:MIP-C1} and \eqref{eq:MIP-C2}.
  This is analogous to the previous step and we observe that for every $i\in[n]$,
  $M_{i,\bdt}Du+M_{i,\bdt}r+q_i+T_{i,\bdt}u\geq 0$ for all
  $u\in\Unc$ is equivalent to \mbox{$\min_{u\in\Unc} \set{M_{i,\bdt}Du +
    M_{i,\bdt}r+q_i+T_{i,\bdt}u} \geq 0$}.
  Again, this holds if and only if there exists a vector
  $c\in\R^g_{\geq 0}$ such that $\zeta^\top c + \rev{M_{i,\bdt}r} +q_i \geq 0$ and
  $\Theta^\top c = (M_{i,\bdt}D+T_{i,\bdt})^\top $.
  The matrix~$C\in\R_{\geq 0}^{g\times n}$ then contains the
  vectors~$c$ as columns.

  Finally, the remaining constraint \eqref{eq:MIP-nonadj} enforces
  that the first $h$~variables are non-adjustable.
\end{proof}

\begin{remark}
  The linear hull $\lin(\Unc)$ of the uncertainty set $\Unc$
  can be computed in polynomial time if $\Unc$ is a polyhedron, i.e.,
  if $\Unc = \defset{u\in\R^k}{\Theta u \geq \zeta}$ as in \Cref{thm:main}.
  We can then maximize once in every direction $\Theta_{j,\bdt}$,
  $j\in[g]$, and check if the optimal value is larger than $\zeta_j$.
  If it is equal to $\zeta_j$, we know $\zeta_j=0$ due to
  $0\in\relint(\Unc)$ and the inequality constraint can be replaced by
  an equality constraint.
  We obtain the representation
  $\Unc = \defset{u\in\R^k}{\Phi u=0, \Theta' u \geq \zeta'}$ with
  $\Phi\in\R^{(g-f)\times k}$, $f\leq g$, and
  $\Theta'\in\R^{f\times k}$.
  The basis of $\lin(\Unc)$ is then given by the basis of ker$(\Phi)$.
\end{remark}

Let us also comment on a difference to the setting considered in
\cite{Biefel_et_al:2022}.
There, the submatrix~$M_I$ has to be invertible for an AAR solution to exist
if all entries of $q$ are uncertain, cf.~Theorem~4.5
in~\cite{Biefel_et_al:2022}.
This is not the case in our setting as the following example shows.

\begin{example}
  Consider the uncertain LCP given by
  \begin{align*}
    M =
    \begin{bmatrix}
      1&-1\\1&-1
    \end{bmatrix},
    \quad
    q(u) =
    \begin{pmatrix}
      -1\\-1
    \end{pmatrix}
    +
    \begin{pmatrix}
      u_1\\u_2
    \end{pmatrix},
    \quad
    \Unc = \Defset{(u_1, u_2)}{-2 \leq u_1 =u_2 \leq 2}.
  \end{align*}
  Then,
  \begin{equation*}
    D =
    \begin{bmatrix}
      -1&0\\0&0
    \end{bmatrix},
    \quad
    r =
    \begin{pmatrix}
      2\\1
    \end{pmatrix}
  \end{equation*}
  is an AAR solution with $I = \set{1,2}$, but the matrix~$M$ is not
  invertible.
\end{example}

Finally note that if $T$ is the identity matrix and $\Unc$ is
a box, the MILP~\eqref{eq:MIP} is equivalent to the MILP in
Theorem~4.7 in~\cite{Biefel_et_al:2022}.


\section{Remarks and Extensions}
\label{sec:remarks}

In this section, we comment on a special case, namely the one in which
$M$ is positive semidefinite, and several possible extensions.

\subsection{Positive Semidefinite $M$}

\rev{
We first consider the case that the matrix $M$ is positive
semidefinite.
In the following, we show that in this setting an AAR solution can be found
in polynomial time.
The same result was shown for box uncertainties in~\cite{Biefel_et_al:2022}
with similar arguments.
For positive semidefinite $M$, Theorem~3.1.7~(a) in~\cite{Cottle_et_al:2009}
states that
\begin{align}\label{eq:Cottle-a}
  y^\top(Mz+q) = z^\top(My+q) = 0
\end{align}
holds for any $y,z\in \text{SOL}(q,M)$, where $\text{SOL}(q,M)$
denotes the set of solutions of the LCP$(q, M)$.
Let
\begin{align*}
  \P = \Defset{i\in[n]}{\exists z\in\text{SOL}(q,M) \text{ with } z_i>0}.
\end{align*}
Due to \eqref{eq:Cottle-a}, every nominal solution ${r\in \text{SOL}(q,M)}$
satisfies ${M_{\P,\bdt}r + q_\P = 0}$.
Therefore, every AAR solution has to satisfy
\begin{align*}
  M_{\P,\bdt} Du + T_{\P,\bdt}u = 0
\end{align*}
for all $u\in\Unc$ as otherwise there would exist a $u'\in\Unc$ with
$M_{i,\bdt}(Du'+r)+\bar{q}+T_{i,\bdt}u'<0$ for some $i\in \P$.
Thus, the set $I$ in \Cref{thm:complementarity} can be replaced by $\P$ and
the MILP~\eqref{eq:MIP} can be simplified to an LP as we do not
need the binary variables anymore.}

\rev{Furthermore, Theorem~3.1.7~(c) in~\cite{Cottle_et_al:2009} states that
$\text{SOL}(q,M)$ is given by
\begin{align*}
  \text{SOL}(q,M) = \Defset{z \in \R^n_{\geq 0}}
  {q+Mz\geq 0, \ q^\top(z-\bar{z}) = 0, \
  (M+M^\top)(z-\bar{z}) = 0},
\end{align*}
where $\bar{z}\in\text{SOL}(q,M)$ is an arbitrary solution.
Such a solution $\bar{z}$ can be found by solving a single convex-quadratic
optimization problem.
With this polyhedral description of $\text{SOL}(q,M)$, $\P$ can be obtained
by solving $n$ linear programs in which $z_i$, $i \in [n]$, is maximized
over~$\text{SOL}(q,M)$ and then checking, whether the optimal value
is strictly positive.
This implies that $\P$ can be computed in polynomial time and, hence, we
can find an AAR solution in polynomial time if $M$ is positive semidefinite.
}

\subsection{Discrete Uncertainty Sets}

Next, we brief\/ly discuss discrete uncertainty sets.
In the following example, for any uncertainty realization in the
discrete set, there exists a solution whereas there does not
exist solutions for some realizations in the convex hull of the uncertainty
set.

\begin{example}
  Consider the LCP given by
  \begin{align*}
    M=
    \begin{bmatrix}
      0&0\\1&0
    \end{bmatrix},
    \quad
    q(u)=
    \begin{pmatrix}
      1+u\\u
    \end{pmatrix}
  \end{align*}
  and $\Unc = \set{\pm 1}$.
  Then, for $u=1$, $z=(0,0)$ is a solution and for $u=-1$, $z=(1,0)$
  is a solution.
  If $\Unc' = \text{conv}(\Unc)$, there is, however, no solution for
  $u= - 1/2$.
\end{example}

This example is in contrast to results for classic robust linear
optimization, where one can always replace the uncertainty set with its
convex hull.
The reason for this behavior can be explained with classic LCP theory.
In the literature, the cone of vectors $q$ for which the LCP($q,M$)
with a given matrix $M$ has a solution is usually denoted by $K(M)$, i.e.,
\begin{align*}
  K(M) = \Defset{q\in\R^n}{\text{SOL}(q,M) \neq \emptyset}.
\end{align*}
In general, $K(M)$ is not convex, and hence the convex hull of some points that
lie in $K(M)$ is not necessarily contained in $K(M)$.
However, $K(M)$ is convex if and only if $M$ is a so-called $Q_0$-matrix,
cf.~Proposition~3.2.1 in~\cite{Cottle_et_al:2009}, and we obtain the
following result.

\begin{corollary}
  Suppose that $M$ is a $Q_0$-matrix.
  Then, the uncertain LCP has a solution for
  all $u\in\text{conv}(\Unc)$ if it has a solution for all $u\in\Unc$.
\end{corollary}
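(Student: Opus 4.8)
The plan is to translate solvability into membership in the cone $K(M)$ and then exploit its convexity. First I would observe that, for a fixed realization $u$, the uncertain LCP has a solution precisely when $\text{SOL}(q+Tu,M)\neq\emptyset$, i.e.\ when $q+Tu\in K(M)$. Thus the hypothesis that a solution exists for all $u\in\Unc$ reads $q+Tu\in K(M)$ for every $u\in\Unc$, while the conclusion to be established is $q+T\hat u\in K(M)$ for every $\hat u\in\text{conv}(\Unc)$.

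Next I would fix an arbitrary $\hat u\in\text{conv}(\Unc)$ and write it as a finite convex combination $\hat u=\sum_i\lambda_i u^i$ with $u^i\in\Unc$, $\lambda_i\geq 0$, and $\sum_i\lambda_i=1$. The key observation is that the perturbation $u\mapsto q+Tu$ is affine, so that, using $\sum_i\lambda_i=1$,
\begin{equation*}
  q+T\hat u = q + T\sum_i\lambda_i u^i = \sum_i\lambda_i\,(q+Tu^i).
\end{equation*}
Hence $q+T\hat u$ is a convex combination of the points $q+Tu^i$, each of which lies in $K(M)$ by hypothesis.

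Finally I would invoke the $Q_0$-property: by Proposition~3.2.1 in~\cite{Cottle_et_al:2009}, $M$ being a $Q_0$-matrix is equivalent to convexity of $K(M)$. Since $K(M)$ is convex and contains each $q+Tu^i$, it also contains their convex combination $q+T\hat u$. Therefore $\text{SOL}(q+T\hat u,M)\neq\emptyset$, which means the uncertain LCP admits a solution for $\hat u$, as claimed.

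I do not expect a genuine obstacle here, since the statement is essentially a one-line consequence of the convexity of $K(M)$ once the reformulation in terms of $K(M)$-membership is in place. The only point requiring care is that the \emph{affine} (rather than merely linear) dependence on $u$ is exactly what converts a convex combination in $u$-space into a convex combination in $q$-space; this relies on the coefficients summing to one and would fail for a more general nonlinear perturbation $q(u)$.
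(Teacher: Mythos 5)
Your proposal is correct and follows exactly the argument the paper intends: the paper derives the corollary directly from the observation that solvability for a realization $u$ means $q+Tu\in K(M)$ and that, by Proposition~3.2.1 in~\cite{Cottle_et_al:2009}, $K(M)$ is convex precisely when $M$ is a $Q_0$-matrix. Your additional remark that the affine dependence of $q(u)$ on $u$ is what turns convex combinations in $u$-space into convex combinations in $q$-space is a correct and worthwhile point of care, but the route is the same as the paper's.
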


\subsection{Decision-Dependent Uncertainty Sets}

The MILP~\eqref{eq:MIP} can be extended to cover simple
decision-dependent uncertainty sets.
To this end, consider the uncertainty set
\begin{align*}
  \Unc(r)=\defset{u\in\R^k}{\Theta u\geq \zeta + \Psi r},
  \quad
  \Psi\in\R^{g\times n},
\end{align*}
that depends on the \rev{chosen nominal solution}~$r$.
If the deviation caused by $\Psi r$ is not too large, in some cases
the linear hull does not change.
Hence, in these cases we only have to replace the constraints
\eqref{eq:MIP-A1} and \eqref{eq:MIP-C1} by their respective quadratic
versions that include the terms~$(\Psi r)^\top A_{\bdt, i}$ and $(\Psi
r)^\top C_{\bdt, i}$, respectively.
We leave the detailed study of such situations for future work.

\subsection{Mixed LCPs}
\label{sec:mixedlcp}

Finally, we discuss so-called mixed LCPs.
These problems consist in finding $z\in\R^n$ and $y\in\R^m$ such that
\begin{subequations}
  \label{eq:mixedlcp}
  \begin{align}
    Vz+Wy+p &= 0, \label{eq:mixedlcp-equality}\\
    Mz+Ny+q &\geq 0, \\
    z &\geq 0,\\
    z^\top (Mz+Ny+q) &= 0
  \end{align}
\end{subequations}
with $M\in\R^{n\times n}$, $N\in\R^{n\times m}$, $q\in\R^n$,
$V\in\R^{m\times n}$, $W\in\R^{m\times m}$, $p\in\R^m$.
We refer to~\cite{Cottle_et_al:2009} for some source problems.

We now brief\/ly demonstrate necessary adaptions to the
MILP~\eqref{eq:MIP} to compute an AAR solution to an
uncertain version of the mixed LCP~\eqref{eq:mixedlcp}.
As before, we assume that $q$ is affected by uncertainty in the form
of $q(u)=Tu$, $u\in\Unc$, and that $z$ is affinely adjustable,
i.e., $z(u)=Du+r$.
Several parameters might be uncertain in the case of mixed LCPs.
In the simplest case, the matrices $V$, $W$, $M$, and $N$ are certain,
$y$ is non-adjustable and only $p(u)=p+Pu$ is uncertain for some given
$P\in\R^{m\times k}$ and $u\in\Unc$.
In this case, the constraints~\eqref{eq:MIP-d2} and~\eqref{eq:MIP-C1}
have to be extended by the term~$Ny$.
Moreover, $D$, $r$, and $y$ have to satisfy the resulting slightly
adapted version of the MILP~\eqref{eq:MIP} and the additional
constraints
\begin{align*}
  Vr+Wy+p & = 0, \\
  (VD+P)v^j & = 0, \quad j \in [l].
\end{align*}
This also includes the special case in which all additional
parameters $p$, $V$, $W$, $M$, and $N$ are certain and $y$ is
non-adjustable.
In this case, the second of the above constraints reduces to $VDv^j=0$
for all $v^j\in[l]$.
In the case of adjustable~$y$, i.e., $y(u)=Eu+s$,
the MILP has to be adapted accordingly in a similar way.
Additionally, $z$ and $y$ have to satisfy the additional constraints
\begin{align*}
  Vr+Ws+p & = 0, \\
  (VD+WE+P)v^j & = 0, \quad j \in [l].
\end{align*}

Compared to the classic LCP, on the one hand we get additional
freedom by being allowed to choose more variables, while on the other
hand, there are additional constraints, some of which might be quite
restrictive.



\printbibliography

\end{document}
